\DeclareMathOperator{\ord}{ord}
\renewcommand{\phi}[0]{\varphi}
\renewcommand{\theta}[0]{\vartheta}
\renewcommand{\epsilon}[0]{\varepsilon}
\newcommand{\Z}{\text{$\mathbf{Z}$}}
\newcommand{\Pro}{\text{$\mathbf{P}^1$}}
\newcommand{\F}{\text{$\mathbf{F}$}}
\newtheorem{theorem}{Theorem}[section]
\newtheorem{lemma}[theorem]{Lemma}
\theoremstyle{definition}
\theoremstyle{remark}
\numberwithin{equation}{section}
\begin{document}

\bibliographystyle{amsalpha}

\date{}

\title[GS]
{Graphs associated with the map $X \mapsto X + X^{-1}$
in finite fields of characteristic three}

\author{S.~Ugolini}
\email{sugolini@gmail.com} 

\begin{abstract}
In \cite{SU2} we described the structure of the graphs associated with the iterations of the map $x \mapsto x+x^{-1}$ over finite fields of characteristic two. In this paper we extend our study to finite fields of characteristics three. 
\end{abstract}

\maketitle
\section{Introduction}
Let $\F_q$ be a finite field with $q$ elements for some positive integer $q$. We can define a map $\theta$ on $\Pro (\F_q) = \F_q \cup \{\infty \}$ in such a way:
\begin{displaymath}
\theta(x) =   
\begin{cases}
x+x^{-1} & \text{if $x \not =0, \infty$}\\
\infty & \text{if $x = 0$ or $\infty$}
\end{cases}
\end{displaymath}

We associate a graph with the map $\theta$ over $\F_{q}$, labelling the vertices of the graph by the elements of $\Pro (\F_{q})$. Moreover, if $\alpha \in \Pro (\F_{q})$ and $\beta = \theta(\alpha)$, then a directed edge connects the vertex $\alpha$ with the vertex $\beta$.
If $\gamma \in \Pro (\F_q)$ and $\theta^k (\gamma) = \gamma$, for some positive integer $k$, then $\gamma$ belongs to a cycle of length $k$ or a divisor of $k$. The smallest among these integers $k$ is the period $l$ of $\gamma$ with respect to the map $\theta$ and the set $\{\theta^i (\gamma) : 0 \leq i < l \}$ is the cycle of length $l$ containing $\gamma$.
An element $\gamma$ belonging to a cycle can be the root of a reverse-directed tree, provided that $\gamma = \theta (\alpha)$, for some $\alpha$ which is not contained in any cycle. 

In \cite{SU2} we dealt with the characteristic $2$ case. There we noticed that the map $\theta$ is strictly related to the duplication map over Koblitz curves. Later we carried out some experiments in characteristics greater than $5$, but the resulting graphs seemed not to present notable symmetries.

In characteristics $3$ and $5$, in analogy with our previous work \cite{SU2}, the graphs exhibit remarkable symmetries.  In this paper we present the characteristic $3$ case.

In characteristic three the structure of the graphs can be described relying upon the fact that $\theta$ is conjugated to the inverse of the square mapping. Fixed a finite field $\F_{3^n}$ we provide the following information about the graph associated with $\theta$:
\begin{itemize}
\item the lengths, the number of the cycles and the number of the connected componenents (Theorem \ref{l2});
\item the depth and the properties of the trees (Theorem \ref{l3}).
\end{itemize}

\section{Structure of the graphs in characteristic three}
In characteristic three the iterations of the map $\theta$ can be studied relying upon the consideration that $\theta$ is conjugated to the inverse of the square map. 
Indeed, if $x$ is any element of a field of characteristic $3$, then 
\begin{equation}\label{conj}
\theta(x) = \psi \circ s \circ \psi(x),
\end{equation} 
where $s$ and $\psi$ are functions defined on $\Pro (\F_{3^n})$ as follows
\begin{displaymath}
s(x)=
\begin{cases}
x^{-2} & \text{if $x \in \F_{3^n}^*$}\\
0 & \text{if $x = \infty$}\\
\infty & \text{if $x = 0$}
\end{cases}
\quad
\psi(x) =
\begin{cases}
\dfrac{x+1}{x-1} & \text{if $x \in \Pro (\F_{3^n}) \backslash  \{1, \infty \}$}\\
1 & \text{if $x = \infty$}\\
\infty & \text{if $x =1$}
\end{cases}
\end{displaymath}

We note that $\psi$ is a self-inverse map over $\Pro (\F_{3^n})$, namely $\psi^2 (x) = x$ for any $x \in \Pro (\F_{3^n})$. Therefore the following holds for the $k$-th iterate of $\theta$:
\begin{equation*}
\theta^k (x) = \psi \circ s^{k} \circ \psi(x).
\end{equation*}

We say that an element $x \in \Pro(\F_{3^n})$ is $\theta$-periodic (resp. $s$-periodic) iff $\theta^k(x) = x$ (resp. $s^k (x) = x)$, for some positive integer $k$. The smallest such $k$ will be called the period of $x$ with respect to the map $\theta$ (resp. $s$).

We prove the following characterization of $\theta$-periodic points.
\begin{lemma}\label{l1}
Let $n$ be a positive integer.
\begin{itemize}
\item The elements $1$ and $-1$ are $\theta$-periodic of period $2$ and form a cycle of length 2.
\item The element $\infty$ is $\theta$-periodic of period $1$. 
\item An element $\alpha \in \Pro( \F_{3^n} ) \backslash \left\{-1 ,1, \infty  \right\}$ is $\theta$-periodic of period $k$  if and only if $\psi(\alpha)$ is $s$-periodic of period $k$. Moreover, the integer $k$ is odd and  is equal to the multiplicative order $\ord_d (-2)$ of $-2$ in $(\Z / d \Z)^*$, where $d$ is the multiplicative order of $\psi(\alpha)$ in $\F_{3^n}^*$.
\end{itemize}
\end{lemma}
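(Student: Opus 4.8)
The plan is to treat the three bullet points in turn, disposing of the first two by direct evaluation and spending essentially all the effort on the third. For the first bullet I would just compute in characteristic three: $\theta(1) = 1 + 1^{-1} = -1$ and $\theta(-1) = -1 + (-1)^{-1} = 1$, so $\{1,-1\}$ is a single $\theta$-orbit of size $2$ and each of its points has period $2$. For the second bullet, $\theta(\infty) = \infty$ is immediate from the definition of $\theta$, so $\infty$ has period $1$.

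For the third bullet I would first establish the equivalence of the two notions of periodicity and then identify the period. Set $\beta = \psi(\alpha)$. Since $\psi$ is an involution of $\Pro(\F_{3^n})$ that interchanges $1 \leftrightarrow \infty$ and $-1 \leftrightarrow 0$, it carries the complement $\Pro(\F_{3^n}) \setminus \{1,-1,\infty\}$ bijectively onto $\F_{3^n}^* \setminus \{1\}$; in particular $\beta$ is a genuine nonzero field element and $s(\beta) = \beta^{-2}$. Using the conjugacy \eqref{conj} and its iterate $\theta^k = \psi \circ s^k \circ \psi$ together with $\psi^2 = \mathrm{id}$, applying $\psi$ to both sides of $\theta^k(\alpha) = \alpha$ shows that this is equivalent to $s^k(\beta) = \beta$. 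As this holds for every $k$, the $\theta$-period of $\alpha$ equals the $s$-period of $\beta$, which is the asserted equivalence.

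Next I would identify the period. A one-line induction gives $s^k(\beta) = \beta^{(-2)^k}$, so $s^k(\beta) = \beta$ holds exactly when $(-2)^k \equiv 1 \pmod d$, where $d = \ord(\beta)$ is the multiplicative order of $\beta$. Hence a finite period exists if and only if $-2$ is a unit modulo $d$, that is (recalling $d \mid 3^n-1$) if and only if $d$ is odd, and in that case the smallest such $k$ is precisely the multiplicative order $\ord_d(-2)$ of $-2$ in $(\Z / d \Z)^*$. This pins down $k = \ord_d(-2)$ as claimed.

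The step I expect to be the main obstacle is the assertion that $k$ itself is odd. My plan is to exploit an extra symmetry: negation $\nu(x) = -x$ commutes with $\theta$, since $\theta(-x) = -\theta(x)$, and under $\psi$ it corresponds to inversion $\beta \mapsto \beta^{-1}$ on the $s$-side (because $\psi(-x) = \psi(x)^{-1}$), which likewise commutes with $s$. Every $\theta$-cycle is therefore either stable or paired under $\nu$, and I would try to read off the parity of $k$ from this structure, for instance by exhibiting a $\nu$-fixed point on a stable cycle or by a parity count of orbits. The delicate point, and the crux of the lemma, is that $\nu$ can act on a cycle as a rotation by half its length, hence freely, so this symmetry by itself does not force the length to be odd; making the oddness of $k$ rigorous will require coupling the symmetry argument with the fine arithmetic of the admissible orders $d$ (the odd divisors of $3^n-1$ that actually occur as $\ord(\beta)$), and this is where I anticipate the real work.
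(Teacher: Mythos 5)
Your handling of the first two bullets and your main argument for the third coincide with the paper's proof: the paper likewise disposes of $1,-1,\infty$ by direct evaluation, sets $\beta=\psi(\alpha)$, uses $\theta^k=\psi\circ s^k\circ\psi$ and $\psi^2=\mathrm{id}$ to get $\theta^k(\alpha)=\alpha\Leftrightarrow s^k(\beta)=\beta$, computes that this means $\beta^{(-2)^k-1}=1$, i.e.\ $d\mid (-2)^k-1$ for $d=\ord(\beta)$, and concludes that the period is $k=\ord_d(-2)$, with periodicity possible exactly when $d$ is odd (since $(-2)^k-1$ is odd). Up to this point your proposal is complete and is essentially the paper's argument.

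The ``main obstacle'' you reserve for last --- proving that the period $k$ itself is odd --- is not an obstacle but a misstatement in the lemma, and you should not attempt to prove it: it is false as written. In the paper's own example over $\F_{3^3}$ one has $d=13$ and $k=\ord_{13}(-2)=12$, and the example section exhibits precisely this cycle of length $12$. What the paper's proof actually establishes is that $d$ (the order of $\psi(\alpha)$, not the period) is odd, and that is exactly what is used downstream: in Theorem \ref{l2} the set $D$ consists of the \emph{odd} divisors of $3^n-1$, while the cycle lengths $\ord_{d_i}(-2)$ are merely shown to avoid $1$ and $2$. So the word ``odd'' in the statement should qualify $d$, a correction your own second paragraph already proves. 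Your instinct that the negation symmetry $\nu(x)=-x$ cannot force odd cycle length is right for the strongest possible reason --- no argument can --- and indeed your worry is realized concretely: on the $12$-cycle in $\F_{27}$, negation ($\alpha^i\mapsto\alpha^{i+13}$) acts as rotation by $6$, i.e.\ freely, exactly the scenario you flagged. Replace the planned symmetry analysis with the one-line observation that $d\mid(-2)^k-1$ forces $d$ odd, and note the typo in the statement.
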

\begin{proof}
An element $\alpha \in \Pro ( \F_{3^n} )$ is $\theta$-periodic if and only if there exists a positive integer $k$ such that
$\theta^k (\alpha) = \psi \circ s^k \circ \psi (\alpha) = \alpha$.
 Let $\beta = \psi(\alpha)$. We have the following equivalences:
\begin{eqnarray*}
\theta^k (\alpha) =  \alpha & \Leftrightarrow &  \psi \circ s^k \circ \psi (\alpha) = \alpha \Leftrightarrow s^k (\psi(\alpha)) = \psi(\alpha).
\end{eqnarray*}

In virtue of what we have just proved $\alpha$ is $\theta$-periodic of period $k$ if and only if $\beta = \psi(\alpha)$ is $s$-periodic of period $k$.

If $\alpha \in \Pro (\F_{3^n}) \backslash \{-1,1, \infty \}$, then $\psi(\alpha) \not \in \{0, \infty \}$. Therefore, $\alpha$ is $\theta$-periodic of period $k$ if and only if $\beta^{(-2)^k} = \beta$, namely $\beta^{(-2)^k-1} = 1$. This latter is true if and only if $d$ divides $(-2)^k-1$. That means that $k = \ord_{d} (-2)$.  Since $(-2)^k-1$ is an odd integer, this is possible iff the multiplicative order of $\beta$ in $\F_{3^n}^*$ is odd.

Finally, consider the elements $1, -1, \infty$. Since $\theta (-1) = 1$, $\theta (1) = -1$ and $\theta (\infty) = \infty$ the first two statements of the claim are proved.
\end{proof}

In the following Theorem the lengths and the number of cycles of the graph associated with $\theta$ over $\F_{3^n}$ are given.
\begin{theorem}\label{l2}
Let $n$ be a positive integer and $D = \left\{d_1, \dots, d_m \right\}$ the set of the distinct odd integers greater than $1$ which divide $3^n-1$. Denote by $ord_{d_i} (-2)$ the multiplicative order of $-2$ in $(\Z / d_i \Z)^*$. Consider the set
\begin{displaymath}
L = \left\{ \ord_{d_i} (-2) : 1 \leq i \leq m \right\} = \left\{l_1, \dots, l_r \right\}
\end{displaymath}
of cardinality $r$, where  $r \leq m$, and the map
\begin{eqnarray*}
l : D & \to & L\\
d_i & \mapsto & \ord_{d_i} (-2).
\end{eqnarray*}
Then:
\begin{itemize}
\item $L \cap \{1, 2 \} = \emptyset$;
\item the length of a cycle in the graph associated with $\theta$ over $\Pro (\F_{3^n})$ is a positive integer belonging to $L \cup \{1, 2 \}$;
\item there is one cycle of length $2$ formed by $1$ and $-1$ and one cycle of length $1$ formed by $\infty$;
\item for any $1 \leq k \leq r$ there are
\begin{equation*}
c_k = \dfrac{1}{l_k} \cdot \sum_{d_i \in l^{-1} (l_k)}  \phi(d_i)
\end{equation*}
cycles of length $l_k$;
\item the number of connected components of the graph is 
\begin{displaymath}
2 + \displaystyle\sum_{k = 1}^r c_k.
\end{displaymath}
\end{itemize}
\end{theorem}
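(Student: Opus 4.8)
The plan is to deduce all five assertions from Lemma \ref{l1}, combined with the cyclic structure of $\F_{3^n}^*$ and the standard fact that in the functional graph of any self-map of a finite set each weakly connected component carries exactly one cycle. For the first bullet I would argue directly by elementary congruences. Since $3^n - 1 \equiv 2 \pmod 3$, no element of $D$ is divisible by $3$; in particular $3 \notin D$. If some $d_i$ satisfied $\ord_{d_i}(-2) = 1$, then $d_i \mid (-2) - 1 = -3$, and if $\ord_{d_i}(-2) = 2$, then $d_i \mid (-2)^2 - 1 = 3$. Either way $d_i \mid 3$, forcing $d_i = 3$ (as $d_i > 1$), which is excluded. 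Hence $L \cap \{1,2\} = \emptyset$.

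For the second and third bullets I would invoke Lemma \ref{l1} verbatim: the elements $1, -1$ form a cycle of length $2$ and $\infty$ a cycle of length $1$, while every other $\theta$-periodic $\alpha$ has period $\ord_d(-2)$, where $d$ is the (necessarily odd, necessarily $>1$) multiplicative order of $\psi(\alpha)$ in $\F_{3^n}^*$. Because $\F_{3^n}^*$ is cyclic, every $d \in D$ is actually attained as such an order, so as $d$ ranges over $D$ the period $\ord_d(-2)$ ranges exactly over $L$ by the definition of $L$ and of the map $l$. Thus every cycle length lies in $L \cup \{1,2\}$, and the two exceptional lengths $1$ and $2$ occur exactly as described.

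The heart of the argument is the fourth bullet. Writing $s: \beta \mapsto \beta^{-2}$, I would observe that since $d$ is odd the map $s$ permutes the set of elements of $\F_{3^n}^*$ of exact order $d$, a set of cardinality $\phi(d)$ because $\F_{3^n}^*$ is cyclic of order $3^n-1$. By Lemma \ref{l1} each such element is $s$-periodic of the \emph{same} period $l = \ord_d(-2)$, depending only on $d$; hence these $\phi(d)$ elements partition into $\phi(d)/l$ cycles, each of length $l$. Since $\psi$ is an involution conjugating $\theta$ to $s$, it carries $s$-cycles bijectively onto $\theta$-cycles of equal length. Summing over $d_i \in l^{-1}(l_k)$ then produces exactly $c_k = \frac{1}{l_k}\sum_{d_i \in l^{-1}(l_k)} \phi(d_i)$ cycles of length $l_k$.

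Finally, for the fifth bullet, the number of connected components equals the total number of cycles, since each component of a functional graph contains precisely one cycle. Adding the two exceptional cycles $\{1,-1\}$ and $\{\infty\}$ to the $\sum_{k=1}^r c_k$ cycles counted above yields $2 + \sum_{k=1}^r c_k$. The main obstacle is the fourth bullet: one must ensure that all elements of a given order $d$ share the single period $\ord_d(-2)$, so that the division $\phi(d)/l$ is exact and produces an integer. This uniformity is exactly what Lemma \ref{l1} provides, since there the period is determined by $d$ alone and not by the particular element of order $d$; once this is in hand the remaining steps are routine.
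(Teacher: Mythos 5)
Your proposal is correct and follows essentially the same route as the paper: Lemma \ref{l1} to classify the periodic points and their periods, the count of $\phi(d_i)$ elements of each odd order $d_i$ transported through the involution $\psi$, and the standard fact that each component of a functional graph contains exactly one cycle. If anything, your write-up is slightly more careful than the paper's at two spots: you make explicit that $s$ permutes the order-$d$ elements so that they partition into cycles of the uniform length $\ord_d(-2)$, and your divisibility argument for $L \cap \{1,2\} = \emptyset$ (both conditions force $d_i \mid 3$) is cleaner than the paper's remark that $\ord_d(-2)=2$ iff $d=3$, which is not literally accurate since $\ord_3(-2)=1$.
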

\begin{proof}
Since any element of $L$ is equal to $\ord_d (-2)$ for some odd integer $d > 1$, then $1$ is not contained in $L$. Moreover $2$ is not contained in $L$ too. In fact, $\ord_d (-2) = 2$ if and only if $d = 3$. But this is not possible, because $3$ does not divide $3^n-1$.

In Lemma \ref{l1} we proved that $\pm 1$ are the only $\theta$-periodic elements of order $2$, while $\infty$ is $\theta$-periodic of period $1$. All $\theta$-periodic elements of $\Pro (\F_{3^n}) \backslash \{-1, 0, 1 \}$ have odd period $k$, where $k$ is the multiplicative order of $-2$ in $(\Z / d \Z)^*$, for some odd integer $d$ which divides $3^n-1$. Therefore, the length of a cycle is an integer belonging to $L \cup \{1, 2 \}$. 

Take an odd divisor $d_i > 1$ of $3^n-1$. In $\F_{3^n}^*$ there are $\phi(d_i)$ elements of order $d_i$. Since $\psi$ is a bijection on $\Pro (\F_{3^n})$, then each of these elements is of the form $\psi(\alpha)$ for some $\alpha \in \Pro (\F_{3^n})$. 

Consider an element $l_k \in L$. Since $\ord_{d_i} (-2) = l_k$ if and only if $d_i \in l^{-1} (l_k)$, then the number of cycles of length $l_k$ is given by $c_k$. Moreover, since any element of $\Pro (F_{3^n})$ is finally periodic, we conclude that the number of connected components of the graph is equal to the number of the cycles.
\end{proof}

We aim at describing the trees rooted at periodic elements of $\Pro (\F_{3^n})$. Before proceeding, we note that the elements $1$ and $-1$ are not roots of any tree. This can be easily seen, since $\theta(x) = \pm 1$ iff $x^2 \pm x +1 = 0$. These two equations are equivalent to the equations $(x-1)^2 = 0$ and $(x+1)^2=0$, since the underlying field has characteristic $3$, and the roots of the two equations are respectively $1$ and $-1$.

Before studying the structure of the trees, we prove the following preliminary result.
\begin{lemma}\label{l4}
Let $n$ be a positive integer and $2^e$, for some positive integer $e$, the greatest power of $2$ dividing $3^n-1$.
Let $\gamma \in \F_{3^n}$ be a non-$\theta$-periodic point (in particular $\gamma \not \in \{1, -1 \}$). Then, $\theta (x) = \gamma$ for exactly two distinct elements $x \in \F_{3^n}$, provided that $\ord(\psi(\gamma)) \not \equiv 0 \pmod{2^e}$, where $\ord(\psi(\gamma))$ is the multiplicative order of $\psi(\gamma)$ in $\F_{3^n}^*$. If, on the contrary, $ \ord ( \psi (\gamma)) \equiv 0 \pmod {2^e}$, then there is no $x \in \F_{3^n}$ such that $\theta (x) = \gamma$.
\end{lemma}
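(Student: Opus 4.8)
The plan is to transport the counting problem through the conjugation \eqref{conj}. Applying the self-inverse map $\psi$ to both sides, the equation $\theta(x) = \gamma$ is equivalent to $s(\psi(x)) = \psi(\gamma)$. Writing $\beta = \psi(\gamma)$ and $y = \psi(x)$, I would first pin down the location of $\beta$. Since $\gamma$ is non-$\theta$-periodic we have $\gamma \in \F_{3^n} \setminus \{1, -1\}$, hence in particular $\gamma \neq \infty$. Recalling that $\psi$ interchanges $0 \leftrightarrow -1$ and $1 \leftrightarrow \infty$, the possibilities $\beta = 0$, $\beta = \infty$, $\beta = 1$ would force $\gamma = -1$, $\gamma = 1$, $\gamma = \infty$ respectively, all excluded; so $\beta \in \F_{3^n}^*$ and, crucially, $\beta \neq 1$. (Alternatively $\beta \in \F_{3^n}^*$ follows from the remark in Lemma~\ref{l1} that $\psi$ maps $\Pro(\F_{3^n}) \setminus \{-1,1,\infty\}$ into $\F_{3^n}^*$.)

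Next I would solve $s(y) = \beta$. Because $\beta \in \F_{3^n}^*$, the values $y = 0$ (giving $s(y)=\infty$) and $y = \infty$ (giving $s(y)=0$) cannot occur, so every solution lies in $\F_{3^n}^*$ and satisfies $y^{-2} = \beta$, i.e. $y^2 = \beta^{-1}$. In the odd-characteristic field $\F_{3^n}$ this equation has exactly two distinct solutions $\pm y_0$ when $\beta^{-1}$ is a nonzero square and none otherwise, and $\beta^{-1}$ is a square precisely when $\beta$ is.

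The arithmetic heart of the argument — and the step I expect to demand the most care — is to recast ``$\beta$ is a square'' in terms of $\ord(\psi(\gamma)) = \ord(\beta)$. Here I would use that $\F_{3^n}^*$ is cyclic of order $3^n - 1 = 2^e u$ with $u$ odd, so the squares form the unique index-two subgroup, of order $(3^n-1)/2 = 2^{e-1}u$. An element $\beta$ is a square iff $\ord(\beta)$ divides $2^{e-1}u$; since the odd part of $\ord(\beta)$ automatically divides $u$, this is equivalent to the $2$-adic valuation of $\ord(\beta)$ being at most $e-1$, that is, to $2^e \nmid \ord(\beta)$. Equivalently, $\beta$ is a non-square exactly when $\ord(\beta) \equiv 0 \pmod{2^e}$. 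This produces two solutions $y$ when $\ord(\psi(\gamma)) \not\equiv 0 \pmod{2^e}$ and none otherwise.

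Finally I would transfer back from $y$ to $x = \psi(y)$. As $\psi$ is a bijection of $\Pro(\F_{3^n})$, distinct solutions $y$ give distinct preimages $x$; and $x = \infty$ would correspond to $y = \psi(\infty) = 1$, which is excluded because $\beta \neq 1$ forces $y \neq 1$. Thus each solution $y$ yields a genuine $x \in \F_{3^n}$, and the number of preimages of $\gamma$ equals the number of such $y$: exactly two when $\ord(\psi(\gamma)) \not\equiv 0 \pmod{2^e}$ and zero when $\ord(\psi(\gamma)) \equiv 0 \pmod{2^e}$, as claimed. The genuine content is the square/order characterization of the third paragraph; the rest is careful bookkeeping at the exceptional points $0$, $1$, $\infty$ to guarantee the count lands exactly in $\F_{3^n}$.
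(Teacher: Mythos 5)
Your proof is correct and follows essentially the same route as the paper: conjugate the equation $\theta(x)=\gamma$ by the self-inverse map $\psi$ to get $\psi(x)^{-2}=\psi(\gamma)$, and then characterize solvability by whether $\psi(\gamma)$ is a square, i.e.\ whether $2^e$ divides $\ord(\psi(\gamma))$. Your treatment is somewhat more explicit than the paper's about the exceptional points $0,1,\infty$ (the paper notes only that $x \notin \{-1,0,1\}$) and phrases the square criterion via the index-two subgroup rather than Euler's criterion $\psi(\gamma)^{(3^n-1)/2}=1$, but these are cosmetic differences.
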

\begin{proof}
Take $\gamma$ as in the hypotheses. We note that, if $\theta (x) = \gamma$, then $x \not \in \{-1, 0, 1 \}$, since $\theta(-1) = 1, \theta(1) = -1$ and $\theta(0) = \infty$, but $\gamma \in \F_{3^n} \backslash \{1, -1 \}$. Hence, there exists $x \in \F_{3^n}$ such that $\theta(x) = \gamma$ iff $\psi \circ s \circ \psi (x) = \gamma$, namely iff $\psi(x)^{-2} = \psi(\gamma)$. This is equivalent to saying that $\psi(\gamma)$ is a quadratic residue in $\F_{3^n}$. This is true iff $\psi(\gamma)^{(3^n-1)/2} = 1$ in $\F_{3^n}^*$, namely iff $\ord(\psi(\gamma)) \mid \frac{3^n-1}{2}$. This latter is equivalent to saying that $\ord(\psi(\gamma)) \not \equiv 0 \pmod{2^e}$.
\end{proof}

In the following result the depth of the reversed binary trees rooted at $\theta$-periodic elements is given.
\begin{theorem}\label{l3}
Let $\alpha \in  \Pro (\F_{3^n}) \backslash \{1, -1 \} $ be a $\theta$-periodic point.
Then, $\alpha$ is the root of a reversed binary tree of depth $e$, where $2^e$ is the greatest power of 2 which divides $3^n-1$. In particular:
\begin{itemize}
\item there are $2^{k-1}$ vertices at any level $1 \leq k \leq e$;
\item the root has one child and all the other vertices at any level $k < e$ have two children;
\item if $\beta \in \F_{3^n}$ belongs to the level $k > 0$ of the tree rooted at $\alpha$, then $2^k$ is the greatest power of $2$ dividing $\ord(\psi(\beta))$.
\end{itemize}
\end{theorem}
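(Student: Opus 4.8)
The plan is to transport the whole question to the map $s$ through the conjugacy \eqref{conj}, and then to read the tree structure off the $2$-adic behaviour of multiplicative orders under the squaring hidden in $s$. Since $\psi$ is an involution and $\theta = \psi \circ s \circ \psi$, the identity $\theta \circ \psi = \psi \circ s$ shows that $\psi$ is an isomorphism of the functional graph of $s$ onto that of $\theta$; hence the reversed tree rooted at the $\theta$-periodic point $\alpha$ is the $\psi$-image of the reversed tree rooted at the $s$-periodic point $\psi(\alpha)$, and it suffices to analyse in-edges. For $\eta \in \F_{3^n}^*$ I will write $v(\eta)$ for the exponent of $2$ in the factorisation of the multiplicative order $\ord(\eta)$; the claim to be proved then reads: a vertex $\beta$ sits at level $k$ of the tree precisely when $v(\psi(\beta)) = k$.

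The first key step is a lemma on square roots, where the combinatorics is concentrated. Taking a $\theta$-preimage $x$ of a vertex $\gamma$ amounts, after applying $\psi$, to solving $\psi(x)^2 = \psi(\gamma)^{-1}$, i.e.\ to extracting a square root of $\psi(\gamma)^{-1}$ in the cyclic group $\F_{3^n}^*$. I will record the following elementary facts, all proved by working in the cyclic $2$-Sylow subgroup of order $2^e$: an element $\eta$ is a square iff $v(\eta) < e$; if $v(\eta) = 0$ then $\eta$ has exactly one square root of odd order and one of order $2\,\ord(\eta)$, so its two square roots $\zeta$ satisfy $\{v(\zeta)\} = \{0,1\}$; and if $1 \le v(\eta) < e$ then both square roots $\zeta$ satisfy $v(\zeta) = v(\eta)+1$. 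The content of Lemma~\ref{l4} is exactly the first of these (the criterion $v < e$ for solvability) together with the count of two preimages.

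With these facts in hand the tree is built level by level. For the root $\alpha$ one has $v(\psi(\alpha)) = 0$, since $\psi(\alpha)$ is $s$-periodic and hence of odd order (the case $\alpha = \infty$, where $\psi(\alpha) = 1$, is included, and its two preimages $0$ and $\infty$ can be checked by hand). The two square roots of $\psi(\alpha)^{-1}$ therefore split as $v = 0$ and $v = 1$: the $v=0$ root corresponds to the periodic predecessor of $\alpha$ on its cycle, while the $v = 1$ root is the unique genuine child; this is the assertion that the root has exactly one child, lying at level $1$. For an interior vertex $\beta$ at level $k \ge 1$ we have $v(\psi(\beta)) = k \ge 1$ and $\beta$ is non-periodic, so Lemma~\ref{l4} applies: $\beta$ has two $\theta$-preimages when $k < e$ and none when $k = e$, and by the square-root lemma both preimages have $v(\psi(\cdot)) = k+1$, so both are non-periodic and sit at level $k+1$. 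Consequently the branching is binary from level $1$ onwards and stops exactly at level $e$ (where $e \ge 1$ as $3^n-1$ is even), giving depth $e$, one vertex at level $1$ and $2^{k-1}$ vertices at each level $1 \le k \le e$, and the stated identification $2^k \,\|\, \ord(\psi(\beta))$ for a level-$k$ vertex.

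The main obstacle is the square-root lemma, and specifically the asymmetry between the root and the interior vertices. The delicate point is the case $v(\eta) = 0$: one must show that of the two square roots of an odd-order element, exactly one is again of odd order (this is the periodic predecessor, which must be excluded from the tree) while the other has $2$-valuation $1$. This is what forces the root to have a single child rather than two, and it is the only place where the local structure departs from uniform binary branching; once it is settled, the remaining cases ($1 \le v < e$ giving two children, $v = e$ giving a leaf) follow from the same computation in the $2$-Sylow subgroup and combine with Lemma~\ref{l4} to finish the proof.
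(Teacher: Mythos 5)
Your proposal is correct and follows essentially the same route as the paper: conjugating by $\psi$ to reduce $\theta$-preimages to square roots in $\F_{3^n}^*$, tracking the $2$-adic valuation of multiplicative orders level by level, using Lemma~\ref{l4} for existence of preimages, and isolating the special behaviour at the periodic root (one odd-order preimage on the cycle, one child of valuation $1$). Your packaging of the order computations into an explicit square-root lemma in the $2$-Sylow subgroup is a slightly cleaner organization of the same induction the paper carries out directly.
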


\begin{proof}
If $\alpha = \infty$, then $\alpha$ is $\theta$-periodic of period $1$. Indeed, $\theta(\infty) = \infty$. Moreover $\theta(x) = \infty$ iff $x=\infty$ or $0$. The point $0$ is the only vertex belonging to the first level of the tree rooted at $\infty$. Moreover, $\psi(0) = -1$, which has order $2$ in $\F_{3^n}^*$.

If $\alpha \in \F_{3^n} \backslash \{-1, 1 \}$ is a $\theta$-periodic element, then $\psi(\alpha) \in \F_{3^n}^*$ and finding all the elements $\beta$ such that $\theta(\beta) = \alpha$ amounts to finding all the elements $\beta$ such that $\psi \circ s \circ \psi (\beta) = \alpha$. This latter is equivalent to $s(\psi(\beta)) = \psi(\alpha)$, namely $\psi(\beta)^2 = \psi(\alpha)^{-1}$. According to Lemma \ref{l1} the order of $\psi(\alpha)$, and consequently $\psi(\alpha)^{-1}$, is odd. Hence, $\left(\psi(\alpha)^{-1} \right)^{(3^n-1)/2} = 1$ in $\F_{3^n}$. Therefore, $\psi(\alpha)^{-1}$ is a quadratic residue in $\F_{3^n}^*$ and there are two distinct roots $r_1, r_2=-r_1$ of $x^2-\psi(\alpha)^{-1}$ in $\F_{3^n}$.  Being the map $\psi$ a bijection on $\Pro (\F_{3^n})$, it follows that $r_1 = \psi(\beta_1)$ and $r_2 = \psi (\beta_2)$ for two distinct elements $\beta_1$ and $\beta_2$ in $\F_{3^n}$. Moreover, since $\alpha$ is $\theta$-periodic, one among $\beta_1$ and $\beta_2$, let us say $\beta_1$, is $\theta$-periodic too and consequently $r_1$ has odd order. On the contrary $\beta_2$ is not $\theta$-periodic and $\ord (\psi(\beta_2)) = 2 \cdot \ord(\psi(\beta_1))$, namely the highest  of $2$ which divides $\ord (\psi(\beta_2))$ is $2$. 

The remaining statements regarding the levels $k>0$ will be proved by induction on $k$. Let us first consider the level $k=1$. If $e=1$, then there are no elements at the second level of the tree by Lemma \ref{l4}. In the case $e > 1$ consider the only element $\gamma$ belonging to the first level of the tree rooted at some $\theta$-periodic point of $\Pro (\F_{3^n}) \backslash \{-1, 1 \}$. We have proved that $2$ is the greatest power of $2$ which divides $\ord ({\psi(\gamma)})$. In virtue of Lemma \ref{l4} there are exactly two elements belonging to the level $2$ of the tree, whose image under the action of the map $\theta$ is $\gamma$.

Now we proceed with the inductive step. Suppose that for some integer $k > 1$ such that $k-1<e$ there are $2^{k-2}$ elements at the level $k-1$ of the tree and that each of these elements has two children. Moreover, if $\gamma$ is one of the elements at the level $k-1$, then $2^{k-1}$ is the greatest power of $2$ which divides $\ord (\psi(\gamma))$. Let $\beta$ any of the children of $\gamma$. Since $\theta(\beta) = \gamma$, we have that $\psi(\beta)^{-2} = \psi(\gamma)$. Then $2^k$ is the highest power of $2$ which divides $\ord(\psi(\beta))$. Finally, if $k<e$, then $\beta$ has two children, while, if $k=e$, then $\beta$ has no child by Lemma \ref{l4}.
\end{proof}

\subsection{An example: the graph associated with $\theta$ over the field $\F_{3^3}$} 
The field with $27$ elements can be constructed as the splitting field over $\F_3$ of the Conway polynomial $x^3-x+1$. In particular, if $\alpha$ denotes a root of such a polynomial, $\Pro(\F_{3^3}) = \left\{ \alpha^i: 0 \leq i \leq 25 \right\}  \cup \{ 0 \}  \cup \left\{ \infty \right\}$.

Below are represented the $3$ connected components of the graph. The labels of the vertices are the exponents of the powers $\alpha^i$, for $0 \leq i \leq 25$, the zero element (denoted by `0') and the point $\infty$. 

We notice that, according with Theorem \ref{l2}, the elements $\alpha^0 = 1$ and $\alpha^{13} = -1$ form a cycle of length $2$, while the cycle formed by $\infty$ has length $1$. Moreover, the set of odd integer divisors of $3^3-1$ greater than $1$ is $D = \{ 13 \}$. Since $\ord_{13} (-2) = 12$, then there is $\dfrac{1}{13} \cdot \phi(13) = 1$ cycle of length $12$. 

Finally, in accordance with Theorem \ref{l3}, any element belonging to a cycle is root of a binary tree of depth $1$.

\begin{center}
    \unitlength=3.7pt
    \begin{picture}(70, 70)(-20,-35)
    \gasset{Nw=3.6,Nh=3.6,Nmr=1.8,curvedepth=-0.5}
    \thinlines
    \footnotesize
    \node(N1)(20,0){$1$}
    \node(N2)(17,10){$20$}
    \node(N3)(10,17){$22$}
    \node(N4)(0,20){$11$}
    \node(N5)(-10,17){$3$}
    \node(N6)(-17,10){$8$}
    \node(N7)(-20,0){$14$}
    \node(N8)(-17,-10){$7$}
    \node(N9)(-10,-17){$9$}
    \node(N10)(0,-20){$24$}
    \node(N11)(10,-17){$16$}
    \node(N12)(17,-10){$21$}
    
    \node(N21)(30,0){$5$}
    \node(N22)(25,15){$25$}
    \node(N23)(15,25){$6$}
    \node(N24)(0,30){$4$}
    \node(N25)(-15,25){$15$}
    \node(N26)(-25,15){$23$}
    \node(N27)(-30,0){$18$}
    \node(N28)(-25,-15){$12$}
    \node(N29)(-15,-25){$19$}
    \node(N30)(0,-30){$17$}
    \node(N31)(15,-25){$2$}
    \node(N32)(25, -15){$10$}

    \drawedge(N1,N2){}
    \drawedge(N2,N3){}
    \drawedge(N3,N4){}
    \drawedge(N4,N5){}
    \drawedge(N5,N6){}
    \drawedge(N6,N7){}
    \drawedge(N7,N8){}
    \drawedge(N8,N9){}
    \drawedge(N9,N10){}
    \drawedge(N10,N11){}
    \drawedge(N11,N12){}
    \drawedge(N12,N1){}
    
    \gasset{curvedepth=0}
     
    \drawedge(N21,N1){}
    \drawedge(N22,N2){}
    \drawedge(N23,N3){}
    \drawedge(N24,N4){}
    \drawedge(N25,N5){}
    \drawedge(N26,N6){}
    \drawedge(N27,N7){}
    \drawedge(N28,N8){}
    \drawedge(N29,N9){}
    \drawedge(N30,N10){}
    \drawedge(N31,N11){}
    \drawedge(N32,N12){}
    
    \node(N40)(40,0){$13$}
    \node(N41)(50,0){$0$}
    \node(N50)(60,0){$\infty$}
    \node(N51)(60,10){`0'}

    \drawedge(N51,N50){}
    
     \drawloop[loopangle=-90](N50){}
     
     \gasset{curvedepth=1}
     \drawedge(N40,N41){}
    \drawedge(N41,N40){}

\end{picture}
\end{center}
\bibliography{Refs}
\end{document}